\newtheorem{theorem}{Theorem}
\newtheorem{lemma}{Lemma}
\newtheorem{proposition}[theorem]{Proposition}
\theoremstyle{definition}
\newtheorem{remark}[lemma]{Remark}
\newtheorem{example}[lemma]{Example}
\numberwithin{equation}{section}
\def\sectionmark#1{\markboth{}{}}
\begin{document}

\def\ssm{\smallsetminus}
\newcommand{\tmname}[1]{\textsc{#1}}
\newcommand{\tmop}[1]{\operatorname{#1}}
\newcommand{\tmsamp}[1]{\textsf{#1}}
\newenvironment{enumerateroman}{\begin{enumerate}[i.]}{\end{enumerate}}
\newenvironment{enumerateromancap}{\begin{enumerate}[I.]}{\end{enumerate}}

\newcounter{problemnr}
\setcounter{problemnr}{0}
\newenvironment{problem}{\medskip
  \refstepcounter{problemnr}\small{\bf\noindent Problem~\arabic{problemnr}\ }}{\normalsize}
\newenvironment{enumeratealphacap}{\begin{enumerate}[A.]}{\end{enumerate}}
\newcommand{\tmmathbf}[1]{\boldsymbol{#1}}

\def\paral{/\kern-0.55ex/}
\def\parals_#1{/\kern-0.55ex/_{\!#1}}
\def\bparals_#1{\breve{/\kern-0.55ex/_{\!#1}}}
\def\n#1{|\kern-0.24em|\kern-0.24em|#1|\kern-0.24em|\kern-0.24em|}

\newcommand{\A}{{\bf \mathcal A}}
\newcommand{\B}{{\bf \mathcal B}}
\def\C{\mathbb C}
\newcommand{\D}{{\rm I \! D}}
\newcommand{\dom}{{\mathcal D}om}
\newcommand{\pathR}{{\mathcal{\rm I\!R}}}
\newcommand{\Nabla}{{\bf \nabla}}
\newcommand{\E}{{\mathbf E}}
\newcommand{\Epsilon}{{\mathcal E}}
\newcommand{\F}{{\mathcal F}}
\newcommand{\G}{{\mathcal G}}
\def\g{{\mathfrak g}}
\newcommand{\HH}{{\mathcal H}}
\def\h{{\mathfrak h}}
\def\k{{\mathfrak k}}
\newcommand{\I}{{\mathcal I}}
\def\LL{{\mathbb L}}
\def\law{\mathop{\mathrm{ Law}}}
\def\m{{\mathfrak m}}
\newcommand{\K}{{\mathcal K}}
\newcommand{\p}{{\mathfrak p}}
\def\P{\mathbb{P}}
\newcommand{\R}{{\mathbb R}}
\newcommand{\Rc}{{\mathcal R}}
\def\T{{\mathcal T}}
\def\M{{\mathcal M}}
\def\N{{\mathcal N}}
\newcommand{\pnabla}{{\nabla\!\!\!\!\!\!\nabla}}
\def\X{{\mathbb X}}
\def\Y{{\mathbb Y}}
\def\L{{\mathcal L}}
\def\1{{\mathbf 1}}
\def\half{{ \frac{1}{2} }}
\def\vol{{\mathop {\rm vol}}}
\def\euc{{\mathop {\rm eul}}}

\newcommand{\term}{{1}}
\newcommand{\tm}{{t}}
\newcommand{\stm}{{s}}
\newcommand{\pole}{{y_0}}

\def\ad{{\mathop {\rm ad}}}
\def\Conj{{\mathop {\rm Ad}}}
\def\Ad{{\mathop {\rm Ad}}}
\newcommand{\const}{\rm {const.}}
\newcommand{\eg}{\textit{e.g. }}
\newcommand{\as}{\textit{a.s. }}
\newcommand{\ie}{\textit{i.e. }}
\def\s.t.{\mathop {\rm s.t.}}
\def\esssup{\mathop{\rm ess\; sup}}
\def\Ric{{\mathop{\rm {Ric}}}}
\def\ric{{\mathop{\rm ric}}}
\def\div{\mathop{\rm div}}
\def\ker{\mathop{\rm ker}}
\def\Hess{\mathop{\rm Hess}}
\def\Image{\mathop{\rm Image}}
\def\Dom{\mathop{\rm Dom}}
\def\id{\mathop {\rm id}}
\def\Image{\mathop{\rm Image}}
\def\Cyl{\mathop {\rm Cyl}}
\def\Conj{\mathop {\rm Conj}}
\def\Span{\mathop {\rm Span}}
\def\trace{\mathop{\rm trace}}
\def\ev{\mathop {\rm ev}}
\def\supp{{\mathrm supp}}
\def\Ent{\mathop {\rm Ent}}
\def\tr{\mathop {\rm tr}}
\def\graph{\mathop {\rm graph}}
\def\loc{\mathop{\rm loc}}
\def\so{{\mathfrak {so}}}
\def\su{{\mathfrak {su}}}
\def\u{{\mathfrak {u}}}
\def\o{{\mathfrak {o}}}
\def\pp{{\mathfrak p}}
\def\gl{{\mathfrak gl}}
\def\hol{{\mathfrak hol}}
\def\z{{\mathfrak z}}
\def\t{{\mathfrak t}}
\def\<{\langle}
\def\>{\rangle}
\def\span{{\mathop{\rm span}}}
\def\diam{\mathrm {diam}}
\def\inj{\mathrm {inj}}
\def\Lip{\mathrm {Lip}}
\def\Iso{\mathrm {Iso}}
\def\Osc{\mathop{\rm Osc}}
\renewcommand{\thefootnote}{}
\def\supp{\mathrm {Supp}}
\def\V{\mathbb V}
\def\vol{{\mathop {\rm vol}}}
\def\cut{{\mathop {\rm Cut}}}
\def\Lip{{\mathop {\rm Lip}}}
\def\Cyl{{\mathop {\rm Cyl}}}

\def\paral{/\kern-0.55ex/}
\def\parals_#1{/\kern-0.55ex/_{\!#1}}
\def\bparals_#1{\breve{/\kern-0.55ex/_{\!#1}}}
\def\n#1{|\kern-0.24em|\kern-0.24em|#1|\kern-0.24em|\kern-0.24em|}
\def\f{\frac}
\title{Generalised Brownian bridges: examples}
\author{Xue-Mei Li \\ {\small Department of Mathematics,  Imperial College London \footnote{Email: xue-mei.li@imperial.ac.uk}}}
\date{Version accepted by Markov Processes and Related Fields}
\maketitle

\begin{abstract}

We observe that the probability distribution of the Brownian motion with drift  $-c \f x {1-t}$ where $c\not =1$ is singular with respect to that of the classical Brownian bridge  measure on $[0,1]$, while their Cameron-Martin spaces are equal set-wise if and only if $c> \f 12$, providing also examples of  exponential martingales on $[0,1)$ not extendable to a continuous martingale on $[0,1]$. Other examples of generalised Brownian bridges are also studied.
 \end{abstract}

{\bf Key words.}  Brownian bridges, time-dependent singular drifts, Gaussian measures, equivalence, Cameron-martin Spaces.

\footnote{AMS Mathematics Subject Classification :  60Dxx, 60 H07,  58J65, 60Bxx} 

\section{Introduction}
For an $L^2$ analysis on the loop space over a manifold  pinned at $x_0$ and $y_0$, it is standard to use the  Brownian  bridge measure, the latter is a Brownian motion from $x_0$ conditioned to reach $y_0$ at $1$ and is also given by the solution to the stochastic differential equation (SDE)
$dz_t=\circ dx_t+\nabla \log p_{1-t}(x_0, z_t)dt$ with initial value $x_0$, where $x_t$ is a Brownian motion and $p_t(x,y)$ denotes the heat kernel. This measure is notoriously difficult  to understand for it would  involve precise analysis of  the gradient and the Hessian of the logarithm of the heat kernel.  For example the class of manifolds for which  the Poincar\'e inequality are known to hold for its Brownian bridge measure are limited: they are
$\R^n$ \cite{Gross}, the hyperbolic space \cite{CLW}, and a class of asymptotically flat manifolds \cite{Aida15}.
On the other hand an integration by parts formula was shown to hold on a manifold with a pole for 
the probability measure induced by the semi-classical Brownian bridge  \cite{semi-classical}. The latter solves an SDE with a time-dependent gradient drift  which differs from $\nabla \log k_{1-t}(x_0,\cdot)$, in general,
but appears to be easier to treat. Hence it is interesting to know whether the two measures are equivalent \cite{semi-classical}.  We also note that the heat kernels measure on the loop space over a simply connected compact Lie group and  the Brownian bridge measure are proven to be equivalent \cite{Aida-Driver}.

 The equivalence of two measures on the loop space are subtle.
 The purpose of this article is to give simple examples of generalised bridge measures, which we introduce shortly,  that are not equivalent. 
 The first class of examples are  the probability measure induced by the solution of the stochastic differential equation  $dz_t=dB_t-\f c{1-t} z_t\;dt$, where $(B_t)$ is a  Brownian motion on $\R^n$. They  induce a family of Gaussian measures, $\nu^{(c)}$,  on $C_{0,0}([0,1]; \R^n)$, the loop space of continuous paths  
 from $[0,1]$ to $\R^n$ pinned at $0$. Gaussian measures are quasi-invariant under translation by a vector from its Cameron-Martin space and they are determined by their Cameron-Martin space and their covariance operators. The Cameron-Martin space for the Wiener measure is $H_0$, the space of finite energy, and that for the Brownian bridge measure $ \nu^{(1)}$ is its sub-space $H_{0,0}=\{ h: [0,1]\to \R^n: h\in H_0, h(1)=0\}$.

We show that  $\nu^{(c)}$ is singular with respect to $\nu^{(1)}$ unless $c=1$, while their  Cameron-Martin spaces are the same, as sets, for all $c> \f 12$. 
We also note that the Cameron-Martin space of the Gaussian measure given by the SDE $dz_t=dB_t-\f {z_t} {(1-t)^\alpha} dt$, where $\alpha>1$,  is not the same as $H_{0,0}$. Finally we give examples of generalised `Brownian bridge measures' which are equivalent to $ \nu^{(1)}$.

The Cameron-Martin \cite{Cameron-Martin} theorem states that  the Wiener measure on the Wiener space $C_0([0,1], \R)$ is quasi-invariant under the linear transformation $x\mapsto x+h$ if and only if $h$ is a Cameron-Martin vector, i.e. $h$ belongs to the Sobolev space $H_0$. By quasi-invariance we mean that the pushed forward Wiener measure is equivalent to the Wiener measure (i.e.  their null sets are preserved).
This is related to the popular Girsanov transform for martingales, for Brownian motions and for stochastic differential equations.  Following this, Woodward gave a sufficient condition on $L$ to ensure
the linear transformation on $C_0([0,1];\R)$,
$x(\cdot) \mapsto x(\cdot)+\int_0^1 L(\cdot,s) dx(s)$,  to be one to one and onto, where the integral is the Wiener integral.
In \cite{Shepp}, Shepp gave a necessary and sufficient condition for  the stochastic process
$x(t)= (\dot h(t))^{-\f 12} W\big(h(t)\big)$, where $h$ is an increasing function with $h(0)=0$,  to be equivalent to the Wiener process $W(t)$.
This generalises a result of Segal \cite{Segal} and others on non-stochastic transformations and uses a representation of Hitsuda \cite{Hitsuda}. See also Varberg \cite{Varberg}. For further discussions on Gaussian measures see \cite{Bogachev}.

Coming back to  Brownian bridges,  there are other recent related studies.  Lupu, Pitman and Tang  \cite{Lupu-Pitman-Tang, Pitman-Tang} studied the probability distributions 
of Brownian bridges under Vervaat transformation, using the first time at which the minimum of the Brownian motion is attained, in terms of path decompositions  and Brownian excursions. They were inspired by the study of the quartile functions, see also \cite{Embrechts-Rogers-Yor}. 
One of their main questions is a Skorohod embedding type problem for the Brownian bridge: is  
${W(\tau+\cdot)-W(\tau)}$ a Brownian bridge for some random time $\tau$?  This problem belongs also to the domain of
shift coupling, see \cite{Aldous-Thorisson}. See also \cite{Bertoin-Chaumont-Pitman,Biane-Legall-Yor, Vigon, Vervaat, Mazzolo, Barczy-Kern}.

\section{Generalised Brownian bridges and examples}

We define a {\it generalised Brownian bridge process}, say $(z_t, t\in [0,1])$,
between $x_0$ and $y_0$ with terminal time $1$, to be a stochastic process satisfying the following conditions:
\begin{itemize}
\item [(1)] $\{z_t, t\in [0, 1)\}$ is a Markov process with infinitesimal generator of the form: $ \f 12 \Delta- f(t) \nabla (\f {r^2(\cdot, y_0)} {2})$ where $f$ is a suitably smooth real valued  function on $[0,1)$
with $\lim_{t\to 1}f(t)=\infty$ and $r$ is a distance function on the state space;
\item [(2)] $ \lim_{t\to 1} z_t~=~y_0$ a.e..
\end{itemize}
Their probability distributions on the space of continuous paths are called generalised Brownian bridge measures.
See also \cite{Li-hypoelliptic-bridge, semi-classical} for other types of Brownian bridges.
In the rest of the section we take the space to be $\R^n$ and the end points to be $x_0=y_0=0$.

A function $G: [0,1)\times [0,1)\to \R_+$ is said to be an approximation to the identity~if the following holds:
\begin{enumerate}
\item [(1)] $\lim_{t\uparrow 1} \int_0^t G(s,t) ds=1$,
\item [(2)]  $\lim_{t\uparrow 1} \int_0^{t_0} G(s,t) ds=0$ for any $t_0<1$.
\end{enumerate}
If $G$ is an approximation to the identity and if  $\sigma: [0,1]\to \R$ is a  continuous function,
then $\lim_{t\to 1}\left( \int_0^t G(s,t) \sigma(s) ds-\sigma(t) \right)=0$.

\begin{proposition}
Let $f: [0,1)\to \R$ be a function such that for any $t_0\in (0, 1)$,
$$\lim_{t\uparrow 1} \int_0^t f(s)\, ds=\infty, \quad  \int_0^{t_0}f(s) \, ds<\infty.$$
Then $G_f(s,t):= f(s) \, e^{-\int_s^t f(r)dr}$ is an approximation of the identity. It follows that  the solution to the  SDE  $dy_t=dB_t-f(t)\,y_t \,dt$ on $\R^n$, where $  t<1$ and $y_0=0$,
is a generalised Brownian bridge from $0$ to $0$. 
\end{proposition}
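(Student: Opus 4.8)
The plan is to treat the two assertions separately: that $G_f$ is an approximation of the identity is a short computation, while that the solution $(y_t)$ is a generalised Brownian bridge rests on the explicit Gaussian representation of $y_t$. I would take them in this order.

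For the first assertion, write $F(s,t):=\int_s^t f(r)\,dr$ and use the elementary identity $\partial_s e^{-F(s,t)}=f(s)\,e^{-F(s,t)}=G_f(s,t)$, which gives $\int_a^t G_f(s,t)\,ds=1-e^{-F(a,t)}$ for all $0\le a\le t<1$. Taking $a=0$ and using $\int_0^t f\to\infty$ yields $\int_0^t G_f(s,t)\,ds\to1$; taking $a=t_0$ and noting that both $F(0,t)=\int_0^t f$ and $F(t_0,t)=\int_0^t f-\int_0^{t_0}f$ tend to $\infty$ as $t\uparrow1$ (the second by the two hypotheses combined) yields $\int_0^{t_0}G_f(s,t)\,ds\to0$. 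So $G_f$ is an approximation of the identity; since $2f$ satisfies the same hypotheses, $G_{2f}(s,t)=2f(s)\,e^{-2F(s,t)}$ is one too, which I use below.

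For the second assertion I would solve the linear SDE with the integrating factor $e^{g(t)}$, $g(t):=\int_0^t f(r)\,dr$, obtaining $y_t=e^{-g(t)}\int_0^t e^{g(s)}\,dB_s=\int_0^t e^{-F(s,t)}\,dB_s$; writing $N_t:=\int_0^t e^{g(s)}\,dB_s$, a continuous $L^2$-martingale on $[0,1)$, this is $y_t=e^{-g(t)}N_t$, in particular $(y_t)$ has continuous paths on $[0,1)$. Its generator is $\tfrac12\Delta-f(t)\,\nabla\bigl(\tfrac{|y|^2}{2}\bigr)$, so condition (1) of the definition holds with $r(\cdot)=|\cdot|$ (here, as in the definition, $f(t)\to\infty$ is understood; note the stated hypotheses on $f$ do not by themselves force this). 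It remains to verify condition (2). The vector $y_t$ is centred Gaussian with $\E|y_t|^2=n\int_0^t e^{-2F(s,t)}\,ds=n\int_0^t \tfrac1{2f(s)}\,G_{2f}(s,t)\,ds$ (on the range where $f>0$), so by the observation preceding the proposition this tends to $n\lim_{t\to1}\tfrac1{2f(t)}=0$; equivalently one may just split the integral at $t_0$, bound $e^{-2F(s,t)}\le1$ on $[t_0,t]$, and let $t_0\uparrow1$. Either way $\E|y_t|^2\to0$.

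The remaining point, and the one I expect to be the main obstacle, is to upgrade $\E|y_t|^2\to0$ to $y_t\to0$ almost surely; here I would use the martingale structure. If $\int_0^1 e^{2g(s)}\,ds<\infty$ then $N_t$ converges a.s.\ as $t\uparrow1$ and, since $e^{-g(t)}\to0$, so does $y_t$, necessarily to $0$. Otherwise, choose $t_0<1$ with $f\ge1$ on $[t_0,1)$ and points $t_k\uparrow1$ with $g(t_k)=k$ (possible since $g$ is continuous, finite and strictly increasing there with $g(t)\to\infty$); on each $[t_k,t_{k+1}]$ one has $|y_s|=e^{-g(s)}|N_s|\le e^{-k}\sup_{r\le t_{k+1}}|N_r|$, so Doob's $L^2$ maximal inequality gives $\E\bigl[\sup_{[t_k,t_{k+1}]}|y_s|^2\bigr]\le 4n\,e^{-2k}\int_0^{t_{k+1}}e^{2g(s)}\,ds$, and bounding $e^{2g(s)}\le e^{2(j+1)}$ on $[t_j,t_{j+1}]$ and summing the resulting geometric series leaves $\sum_k\E\bigl[\sup_{[t_k,t_{k+1}]}|y_s|^2\bigr]\lesssim\sum_j(t_{j+1}-t_j)=1-t_0<\infty$. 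Hence $\sup_{[t_k,t_{k+1}]}|y_s|\to0$ a.s.\ by Borel--Cantelli, i.e.\ $\lim_{t\to1}y_t=0$ a.s., which is condition (2). (An alternative for this step is Dambis--Dubins--Schwarz together with the law of the iterated logarithm, but the estimate $\E|y_t|^2\log\log\langle N\rangle_t\to0$ it requires is essentially the same computation as the geometric sum above.)
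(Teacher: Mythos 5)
Your proof is correct, and the first half (the computation showing $G_f$ is an approximation of the identity via $\partial_s e^{-F(s,t)}=G_f(s,t)$) coincides with the paper's. Where you diverge is the verification of $\lim_{t\to 1}y_t=0$. The paper's route is pathwise and deterministic: integrating by parts in the explicit solution gives $y_t=B_t-\int_0^t G_f(s,t)\,B_s\,ds$, and since $s\mapsto B_s(\omega)$ is continuous, the observation stated just before the proposition yields $y_t\to B_1-B_1=0$ for almost every path --- this is precisely what the approximation-of-identity machinery was set up for, and it finishes the proof in one line. You instead prove $\E|y_t|^2\to 0$ by the It\^o isometry (using $G_{2f}$, a nice touch) and then upgrade to almost sure convergence by a martingale argument: either $N_t=\int_0^t e^{g(s)}dB_s$ is $L^2$-bounded and converges, or you partition $[t_0,1)$ by the level sets $g(t_k)=k$ and combine Doob's maximal inequality with a geometric summation and Borel--Cantelli. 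Your argument is longer and needs $f$ eventually bounded below (you correctly flag that the stated hypotheses alone do not force $f(t)\to\infty$, a point the paper glosses over), but it is more robust in that it does not lean on the continuity of the integrand in the convolution representation; the paper's argument is shorter and explains why the proposition is phrased around approximations of the identity in the first place. Both are valid; if you keep yours, it would still be worth recording the identity $y_t=B_t-\int_0^t G_f(s,t)B_s\,ds$, since it makes the first assertion of the proposition do actual work in proving the second.
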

\begin{proof}
Since $\int_0^{t_0} G_f(s,t)ds=e^{-\int_0^t f(s)ds} \left(e^{\int_0^{t_0} f(s)ds}-1\right)$,
and  by the assumption that $ \lim_{t\uparrow 1}e^{-\int_0^t f(s)ds}=0$, $G_f$ is indeed an approximation to the identity.
The solution to the SDE is explicit and given by the formula $y(t)=e^{-\int_0^t f(r)dr}\int_0^t e^{\int_0^s f(r)dr}dB_s$. An integration by part shows that,
$$y(t)=B_t- e^{-\int_0^t f(r)dr}\int_0^t  B_s\f{d } {ds}  e^{\int_0^s f(r)dr}\, ds\to 0
=B_t -\int_0^t G_f(s,t) B_s\, ds,$$
proving the proposition.
 \end{proof}

We will need the following estimates.

\begin{lemma}\label{lemma1}
If  $h\in H$ then  $\f{ h(1)-h(u)}{1-u}\in L^2$. If $f\in L^2([0,1],\R^n)$ and  $c\in (\f 12, 1]$,  set
$$g(x)=\f 1 {(1-x)^{1-c}}\int_0^x \f {f(y)}{(1-y)^c} \,dy.$$
Then $\|g\|_{L^2} \le \f 2 {2c-1}  \|f\|_{L^2}$.
\end{lemma}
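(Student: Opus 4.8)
The plan is to treat the two assertions separately, since they are essentially independent. For the first claim, if $h \in H$ (the finite-energy paths with $h(1) = 0$), then $h(1) - h(u) = -h(u) = -\int_u^1 \dot h(r)\,dr$, so by Cauchy--Schwarz $|h(1) - h(u)|^2 \le (1-u)\int_u^1 |\dot h(r)|^2\,dr$, whence
$$\int_0^1 \left|\f{h(1)-h(u)}{1-u}\right|^2 du \le \int_0^1 \f{1}{1-u}\left(\int_u^1 |\dot h(r)|^2\,dr\right) du.$$
Swapping the order of integration (Fubini--Tonelli, everything non-negative), the inner integral over $u \in (0,r)$ of $\f{1}{1-u}$ is $-\log(1-r)$, so the bound becomes $\int_0^1 (-\log(1-r))\,|\dot h(r)|^2\,dr$. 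This is not obviously finite from $h \in H$ alone, so here I expect one must use the Hardy-type inequality instead: write $\f{h(1)-h(u)}{1-u} = \f{1}{1-u}\int_u^1 \dot h(r)\,dr$ and apply the classical Hardy inequality $\int_0^1 \left(\f{1}{1-u}\int_u^1 g\right)^2 du \le 4\int_0^1 g^2$ with $g = \dot h$; this is exactly the $c=1$ endpoint of the second part, so really the first claim is a special case of the second, and I would present it that way or fold it in.

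For the second claim, the cleanest route is to recognise $g$ as (a constant multiple of) the image of $f$ under a weighted Hardy operator and to prove the $L^2$ bound by a direct Schur-test / Minkowski-integral-inequality argument. Concretely, substitute to make the kernel homogeneous: the map $f \mapsto g$ has kernel $K(x,y) = (1-x)^{c-1}(1-y)^{-c}\mathbf 1_{\{y<x\}}$, and after the change of variables $x = 1 - e^{-s}$, $y = 1 - e^{-t}$ this becomes a convolution-type operator on $L^2(0,\infty)$ with kernel $e^{-(c-1/2)(s-t)}\mathbf 1_{\{t<s\}}$ acting on $\tilde f(t) = e^{-t/2} f(1-e^{-t})$ — the $e^{-t/2}$ factors absorb the Jacobians and keep the $L^2$ norm. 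One then bounds the $L^1$ norm of this convolution kernel: $\int_0^\infty e^{-(c-1/2)u}\,du = \f{1}{c-1/2} = \f{2}{2c-1}$, and Young's inequality (or Minkowski's integral inequality) gives $\|g\|_{L^2} \le \f{2}{2c-1}\|f\|_{L^2}$, which is exactly the claimed constant. Note this argument needs $c > \f12$ precisely so that the exponent $c - \f12$ is positive and the kernel is integrable; at $c = 1$ it recovers the Hardy constant $\f{2}{2\cdot 1 - 1} = 2$... wait, the classical Hardy constant is $4$ in the squared form, i.e. $2$ on the norm, so this is consistent with the first claim after taking $f = \dot h$ up to the identification $h(u) = -(1-u)\cdot[\text{something}]$; I would double-check the bookkeeping but the structure is right.

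The main obstacle I anticipate is purely bookkeeping: getting the change of variables and the absorption of Jacobian factors exactly right so that the operator becomes an honest $L^1$-kernel convolution on $L^2(0,\infty)$, and confirming that the constant that pops out is $\f{2}{2c-1}$ rather than, say, $\f{1}{2c-1}$ or $\f{4}{2c-1}$. An alternative that avoids the substitution entirely is a direct Schur test on $(0,1)$: find a positive weight $w$ with $\int_0^1 K(x,y)w(y)\,dy \le C\,w(x)$ and $\int_0^1 K(x,y)w(x)\,dx \le C\,w(y)$; the natural guess is $w(y) = (1-y)^{-\beta}$ for a suitable $\beta \in (c-1, c)$, and optimising $\beta$ (symmetry suggests $\beta = c - \f12$, the midpoint) should give $C = \f{1}{2c-1}$ for each integral, hence $\|g\|_{L^2} \le \f{1}{2c-1}\|f\|_{L^2}$ — strictly better than stated, so the weaker bound $\f{2}{2c-1}$ would certainly follow. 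I would likely write up the Schur-test version since it is self-contained and elementary, and simply not bother optimising the constant beyond what the lemma asks for.
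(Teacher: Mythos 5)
Your argument is correct and arrives at the stated constant, but it takes a genuinely different route from the paper. The paper proves the first claim by integrating $\int_0^t |h(s)-h(1)|^2(1-s)^{-2}\,ds$ by parts and absorbing half of the resulting term back into the left-hand side (the standard ``absorption'' proof of Hardy's inequality); for the second claim it expands $\|g\|_{L^2}^2$ as a double integral, applies Fubini in the outer variable to get the exact identity $\|g\|_{L^2}^2=\f{2}{2c-1}\int_0^1\<g(z),f(z)\>\,dz$, and finishes with Cauchy--Schwarz. Your substitution $x=1-e^{-s}$ turning the weighted Hardy operator into convolution against $e^{-(c-1/2)u}\1_{\{u>0\}}$ on $L^2(0,\infty)$, followed by Young's inequality with $\|K\|_{L^1}=\f{2}{2c-1}$, is a clean and fully rigorous alternative, and your observation that the first claim is the adjoint of the $c=1$ case of the second is accurate. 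What the paper's computation buys is the self-contained identity relating $\|g\|^2$ to $\<g,f\>$ with no change of variables; what yours buys is transparency about where the condition $c>\f12$ and the constant come from (integrability and $L^1$-mass of the exponential kernel), and it makes clear the constant is sharp at $c=1$. One side remark of yours is off: the optimal Schur weight is $w(y)=(1-y)^{-1/2}$ (i.e.\ $\beta=\f12$, not $\beta=c-\f12$), and it yields $\sqrt{C_1C_2}=\f{2}{2c-1}$ again --- not the improved constant $\f1{2c-1}$ you speculated, which cannot hold in general since $\f{2}{2c-1}$ at $c=1$ is the sharp Hardy constant. Since you flagged this as an unverified guess and do not rely on it, it does not affect the validity of your proof.
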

\begin{proof}
If  $h\in H$, it is well known that  $\f{ h(1)-h(u)}{1-u}\in L^2$. In fact
$$\begin{aligned}
\int_0^t  \f {|h(s) -h(1)|^2} {(1-s)^2}ds
&=   \f {|h(t)-h(1)|^2} {1-t}-\int_0^t  \f {\<h(s)-h(1), 2\dot h(s)\>} { 1-s}ds\\
&\le \f { |h(t)-h(1)|^2} {1-t}+2 \int_0^t |\dot h(s)|^2 ds+\f 12 \int_0^t \f {| h(s)-h(1)|^2 }{(1-s)^2}ds.
\end{aligned}$$
Let $f\in L^2$, then
$$\begin{aligned}
&(\|g\|_{L^2})^2=\int_0^1 \f {1}{(1-x)^{2-2c}} \int_0^x\int_0^x \f {\<f(y),f(z)\>}{(1-y)^c(1-z)^c} \,dy\,dz\, dx\\
&= \int_0^1 \int_0^1 \f{\<f(y),f(z)\> }{(1-y)^c(1-z)^c} \int_{y\vee z}^1  \f {dx}{(1-x)^{2-2c}}  \,dy\,dz \\
&=\f 1 {2c-1}  \int_0^1 \int_0^1 \f{\<f(y),f(z)\> }{(1-y)^c(1-z)^c}  { (1-y\vee z)^{2c-1} }  \,dy\,dz\\
&=\f 2 {2c-1}  \int_0^1 \int_0^z \f{\<f(y),f(z)\> }{(1-y)^c(1-z)^c}  { (1-z)^{2c-1} }  \,dy\,dz\\
&=\f 2 {2c-1}  \int_0^1\< g(z), f(z)\>dz.
\end{aligned}$$
The required estimate follows from the Cauchy-Schwartz inequality. \end{proof}

For a  real valued  function $f$ on $[0,1)$ we define
$$\Phi(f)(t)=e^{\int_0^t f(s)ds}, \quad t<1.$$

\begin{example}\label{example1}
For $\alpha >1$ set $f_\alpha(t)=\f 1 {(1-t)^\alpha}$ and let $(y_t^\alpha)$ be the solution to
the following SDE on $\R^n$:
$dy_t=dB_t-f_\alpha(t) \,y_t \,dt$ where $t<1$, and $y_0=0$.
 Then $y_t^\alpha$ is a generalised Brownian bridge. Its probability distribution is
singular with respect to the Brownian bridge measure on the loop space~$C_{0,0}\R^n$.
They have different Cameron-Martin spaces.
\end{example}

\begin{proof}
Since $(y_t^\alpha, t\le 1)$ is a Gaussian process, by the Feldman-Hajek theorem it is either equivalent to
the Brownian bridge measure or singular to it. For two Gaussian measure to be equivalent it is a necessary condition that their Cameron-Martin spaces agree. 
The Cameron-Martin space for $(y_t^\alpha)$
is
$$H^{(\alpha)}
=\left\{ k:[0,1]\to \R^n: k(t)=\Phi(-f_\alpha)(t)\int_0^t \Phi(f_\alpha)(s) \f {d}{ds} h(s) ds, h \in H\right\}.$$
This follows from the following fact. Let $\mu$ be a measure on a Banach space $E$ and let $T$ be a linear map from 
$E$ to another Banach space $\hat E$. We denote by $\nu=T_*\mu$  the pushed forward measure. Then the Cameron-Martin space of $\nu$ is the image of the Cameron-Martin space of $\mu$ by $T$. Take $\mu$ to be the Wiener measure, $E=L^2([0,1];\R^n)$ and $T$ the map
$$T(\sigma)(t)=\Phi(-f_\alpha)(t)\int_0^t \Phi(f_\alpha)(s)\; d\sigma(s).$$
After an integration by parts, we see that $T(\sigma)(t)=\sigma(t) -\tilde T(\sigma)(t)$ where  
$$\tilde T(\sigma)(t)= \Phi(-f_\alpha)(t)\int_0^t \f d {ds}\left[\Phi(f_\alpha)(s) \right]\sigma(s)ds.$$
Since $ \Phi(-f_\alpha(t))\to 0$ as $t\to 1$, ${\Phi(-f_\alpha)(t)}\f d{ds}[{\Phi(f_\alpha)(s) }]$ is
 an approximation of the identity and  $\lim_{t\to 1} y_t^\alpha=0$ for $\alpha \ge 1$.

For $\alpha=1$ it is easy to see that $H^{(1)}=H_{0,0}$, where  $H_{0,0}=\{h \in H: h(1)=0\}$. Suppose that  $k$ and $ h$ are related by the formula
$k(t)=(1-t) \int_0^t \f {\dot h(s) }{1-s}\,ds$. If  $k \in H_{0,0}$ then
$\dot h(s)=\dot k(s)+\f {k(s)} {1-s}\in L^2([0,1])$; if $h\in H$ then
$\dot k(t)=\dot h(t)-\int_0^t \f {\dot h(s)} {1-s} ds$  belong to $ L^2([0,1])$. Both by Lemma \ref{lemma1}.
For $\alpha>1$ consider  the inverse map $T^{-1}$:
$$ h=T^{-1}(k)=\Phi(-f_\alpha) \f d {dt}\left[ \Phi(f_\alpha)k\right].$$
For $k \in H_{0,0}$, 
$\dot h(t)=\dot k(t)-   \Phi(-f_\alpha)\; \f d {dt} [\Phi(f_\alpha) ] \; k(t)$
 belongs to $L^2$ if and only if the second term, $ \Phi(-f_\alpha)\f d {dt}[ \Phi(f_\alpha) ] k(t)=\f {k(t)} {(1-t)^\alpha}$, does. It is possible to find
$k\in H_{0,0}$ such that $\f {k(t)} {(1-t)^\alpha}$ does not belong to $L^2$, e.g. take $k(t)$ to be of the order $(1-t)^{\f 12+\epsilon}$ where $\epsilon<\alpha -1$. This means that $H^{(\alpha)}\not =H$, and
the generalised Brownian bridge measure is not equivalent to the Brownian bridge measure.
\end{proof}

\begin{remark}
Let $(M_t^\alpha, t<1)$ denote the exponential martingale, in the Girsanov transform from the Brownian bridge to the generalised Brownian bridge.  It cannot be extended to a martingale on $[0,1]$, see Theorem 3.1 of \cite{Revuz-Yor}. 
 In view of  the use of strict local martingales in the study for financial bubbles, see \cite{Delbaen-Schachermayer,  Mijatovic-Urusov, Pal-Protter, Protter15, ELY97, ELY99, Li-slm}, this type of exponential martingales might be interesting in mathematical finance. The same can be said of
the class of, somewhat surprising, examples below.
\end{remark} 
\begin{example}
Let  $c>0$ be a real number and let $(z^{(c)}(t))$ be the solution to the SDE
$dz_t=dB_t-\f c{1-t} z_t\,dt$ with $z_0=0$. The solution will be called the generalised Brownian bridge with parameter $c$.
Then \begin{itemize}
\item [(a)] $(z^{(c)}_t)$ is a generalised Brownian bridge.  
\item [(b)] Denote by $ \nu^{(c)}$  its
probability distribution on the loop space.
Then its Cameron-Martin space $H^{(c)}$ agrees with $H^{(1)}$ as a set if and only if $c> \f 12$. 
\item [(c)]  The generalised Brownian bridge measures  $\nu^{(1)} $ and $ \nu^{(c)}$   are mutually singular unless $c=1$. 
\end{itemize}
\end{example}
\begin{proof}
Following  the proof of Example \ref{example1}, we define:
$$T(\sigma)(t)=(1-t)^c\int_0^t \f 1{(1-s)^c} d \sigma(s)=\sigma(t)-c(1-t)^c \int_0^t \f {\sigma(s)}{ (1-s)^{c+1}} ds,$$
the first integral being a stochastic integral. The Cameron-Martin space of the Gaussian distribution of $(z_t^{(c)})$ is
$$H^{(c)}
=\left\{ k:[0,1]\to \R^n: k(t)=(1-t)^c\int_0^t \f{  \dot h(s) }{(1-s)^c}ds, h \in H\right\}.$$
If $k\in H_{0,0}\equiv H^{(1)}$,  let $ h(t):=T^{-1}(k)(t)=(1-t)^c \f d {dt}\left[ (1-t)^{-c}k(t)\right]$.
Since $k(1)=0$,  $\f {k(t)}{1-t} \in L^2$ by Lemma \ref{lemma1}, consequently, $\dot h(t)=\dot k(t) + c \f{ k(t)}{1-t}$ belongs to  $L^2$.  
Hence $T^{-1}$ maps  $H_{0,0}$ to  $H^{(c)}$.

Let $h\in H$. It is clear that $T(h)(0)=0$ and $T(h)(1)=\lim_{t\to 1} (1-t)^c\int_0^t \f{  \dot h(s) }{(1-s)^c}ds=0$. 
 We only need to prove the second term, in the following formula 
 $$\f d{dt} T(h)(t)=\dot h(t)-c(1-t)^{c-1} \int_0^t \f {\dot h(s)}{(1-s)^c} ds,$$
belongs to $L^2$.  For $c\in (\f 12, 1]$ this follows from  Lemma \ref{lemma1}. Let $c>1$.
We observe that  $\int_{u\vee v}^1(1-t)^{2c-2} dt= \f 1{ 2c-1} (1-u\vee v)^{2c-1}$,
and $(1-t)^{2c-1}$ makes sense on $[0,1]$, so  we have the estimate below.
$$\begin{aligned}
&\int_0^1 (1-t)^{2c-2} \left|\int_0^t \f {\dot h(s)}{(1-s)^c} ds\right|^2 dt
= \int_0^1 \int_0^1 \f{ \<\dot h(u), \dot h(v)\> }{(1-u)^c(1-v)^c}  \f{ (1-u\vee v)^{2c-1} }{2c-1} dudv\\
&=\f 2 {2c-1} \int_0^1 \int_0^u \left\< \f{ \dot h(v) }{1-v},  \dot h(u) \f{(1-u)^{c-1}}{(1-v)^{c-1}} \right\>  dv\,  du\le  \f 2 {2c-1} \int_0^1 \left| \int_0^u\f{ \dot h(v) }{1-v} dv\right|
|\dot h(u) |  du,
\end{aligned}$$
which is finite by Cauchy-Schwartz inequality and by Lemma \ref{lemma1}. Hence $H^{(c)}$ is contained in $H_{0,0}$ for $c>\f 12$.

If $c\le \f 12$,  for
$\int_0^1 (1-t)^{2c-2} \left|\int_0^t \f {\dot h(s)}{(1-s)^c} ds\right|^2 dt$ to be finite  it is necessary that
$\lim_{t\to 1} \int_0^t \f {\dot h(s)}{(1-s)^c} ds=0$, a condition not satisfied by every $h\in H$.
We have proved that $H^{(c)}=H_{0,0}$ for and only for $c> \f 12$. In particular the measure of
 the generalised bridge process with parameter $c\le \f 12$ is singular with respect to the bridge measure.

 We need a more subtle argument to show that he generalised bridge process with parameter $c>\f 12$ is singular 
with respect to the bridge measure.  Since  their Cameron-Martin spaces agree  as a set
we will need to study their covariance operators. 
Set
$$(R \phi)(t)=\int_0^1 (s\wedge t -st) \phi(s) ds, \quad Q_c(s,t)= \int_0^{s\wedge t} \f {(1-t)^c   (1-s)^c} {(1-r)^{2c}}dr.$$
Let  $R_c: L^2([0,1];\R^n) \to L^2([0,1];\R^n) $ denote the covariance operator of the distribution of $(z_t^{(c)})$:
$$\begin{aligned}(R_c \phi)(t)&= \int_0^1  \phi(s) Q_c(s,t)\;ds
=\f {(1-t)^c } {2c-1} \int_0^1  \phi(s) (1-s)^c \left[(1-{s\wedge t})^{1-2c}-1\right] \;ds.
\end{aligned}$$
In particular  $R$ is  the covariance operator for the Brownian bridge.
By the Feldman-Hajek theorem the two probability measures are equivalent if
and only if  $R^{-\f 12} R_c R^{-\f 12} $ is the sum of an identity map and a Hilbert-Schmidt operator, where $R^{1/2}$ is the square root defined by functional  calculus \cite{DaPrato-Zabczyk}.

We define an operators $A: L^2([0,1];\R^n)\to L^2([0,1];\R^n)$ by the formula:
$$(Af)(t)=\int_0^t f(s)ds-t \int_0^1 f(s)ds.$$
We also define $A^*: L^2([0,1];\R^n)\to L^2([0,1];\R^n)$ by the formula
$$(A^* \phi)(t)=\int_t^1 \phi(s) ds- \int_0^1 s\phi(s)ds.$$
The image of $A$ is the set of $L^2$ functions with the boundary condition $\phi(0)=\phi(1)=0$
while that of $A^*$ is $E=\{ f\in L^2([0,1];\R^n): \int_0^1 f(s)ds=0\}$. Denote by $(A^*)^{-1}$ the left inverse of 
$A^*$, then $(A^*)^{-1}=-\f d {dt}$, without any boundary condition. Furthermore
 $A \f d{dt} f =f$ when restricted to $H_{0,0}$.
We observe that the image of
$R_c$ is contained in the Cameron-Martin space $H^{(c)}=H_{0,0}$ (it is also dense there).
Restricted to $H_{0,0}$,  the right inverse of $A$, which we denote by $A^{-1}$,  is $\f d{dt}$.

It is easy to see that  $R\f d{dt} \phi=-A \phi$, $-\f d {dt} R \f d{dt} \phi =\phi-\int_0^1 \phi(s) ds$ and
$R=AA^*$. Observe that  $R$ is invertible on $E$, $A$ and $R^{1/2}$ differ by a unitary operator.
To check whether $R^{-\f 12} R_c R^{-\f 12} $ is the sum of an identity map and a Hilbert-Schmidt operator,
it is sufficient to prove that $A^{-1} R_c (A^*)^{-1} $ is the sum of an identity map and a Hilbert-Schmidt operator. Indeed, if $A^{-1} R_c (A^*)^{-1} =I+K$, where $K$ is a Hilbert-Schmidt operator, then
$R^{-\f 12} R_c R^{-\f 12}=I +(R^{-\f 12}A)K(R^{-\f 12}A)^*$.

Let $\phi: [0,1]\to \R^n$ be a test function, then
$$\begin{aligned}
&-(R_c (A^*)^{-1}  \phi)(t)
=(R_c \dot \phi )(t)
=\int_0^1 \int_0^{s\wedge t} \f{(1-s)^c(1-t)^c}{(1-r)^{2c}} dr\; \dot \phi(s)\; ds\\
&=(1-t)^c\left[ \int_0^t \int_0^s  \f{dr}{(1-r)^{2c}} (1-s)^c\; \dot \phi(s)\; ds
+  \int_t^1(1-s)^c \dot \phi(s) ds\int_0^t \f {dr} {(1-r)^{2c}}\right].
\end{aligned}$$
 We then apply the operator $A^{-1}$:
 $$\begin{aligned}
\f d {dt}(R_c (A^*)^{-1}  \phi)(t)
&=-\left[\f 1{ (1-t)^{c}}\int_t^1 (1-s)^c \dot \phi(s) \;ds -\f c {1-t} \; (R_c \dot \phi) (t)\right]\\
&=\phi(t)-c\int_t^1 \f {(1-s)^{c-1}} {(1-t)^c} \phi(s) ds
+\f c {1-t} \; (R_c \dot \phi) (t).\end{aligned}$$
We compute the last term,
$$\begin{aligned}
\f c {1-t} \; (R_c \dot \phi) (t)
= &-c(1-t)^{c-1} \left[  \int_0^t \f {\phi(s)} {(1-s)^c}  \; ds
\right]\\
&+c^2\int_0^1\int_0^{s\wedge t}\f {(1-t)^{c-1}(1-s)^{c-1} }{(1-r)^{2c}} dr\; \phi(s) ds.
\end{aligned}$$
Thus,  $$\left(A^{-1}R_c (A^*)^{-1}  \phi\right)(t)=\f d {dt}(R_c (A^*)^{-1}  \phi)(t) =\phi(t)+\int_0^1 q_c(s,t) \phi(s) ds,$$ where
$$\begin{aligned}q_c(s,t)=-c\f{(1-s\vee t)^{c-1}}{(1-s\wedge t)^c}+c^2 {(1-t)^{c-1} (1-s)^{c-1}} \int_0^{s\wedge t} \f {dr}{(1-r)^{2c}}.\end{aligned}
$$
For $c=1$,  $q_1(s,t)=-1$ and $\int_0^1 q_1(s,t) \phi(s)ds$ vanishes on $E$. For $c\not =\f 12$,
$$q_c(s,t)=\f {c(1-c)}{2c-1} \f{(1-s\vee t)^{c-1}}{(1-s\wedge t)^c}- \f{c^2}{1-2c}  (1-t)^{c-1}(1-s)^{c-1}.$$
This is not an $L^2$ function: the second term on the right hand side is $L^2$ integrable for $c>\f 12$ while 
the first term  has a logarithmic singularity unless $c=1$.  
We have proved that the generalised Brownian bridge bridge process  with parameter $c$ and the classical Brownian bridge are mutually singular for any $c$ in $ (0, 1)\cup (1, \infty)$.
 \end{proof}
 
 Finally we observe a perturbative result.

\begin{example}
Let $\delta\in (0, 1/2)$ and $c$ be constants satisfying $c^2\le \f 1 4 (1-2\delta)$.
We consider the equation $dx_t=dB_t -\f {x_t} {1-t} dt+ \f {f(t, x_t)}{(1-t)^\delta}\,dt$ where $f:[0,1]\times \R^n\to \R^n$
satisfies the bound $|f(t, x)|^2\le c|x|^2+c$. Suppose that the SDE is well posed and is conservative.
Then the probability distribution  of $(x_t)$, which we denote by $\nu_2$,  is equivalent to $\nu^{(1)}$.
\end{example}
\begin{proof} 
Let $z_t$ solves $dz_t= dB_t -\f {z_t} {1-t} dt$.  
Since $B_t -tB_1$ is a representation of the Brownian bridge,  we see that
$\E \exp ({a\sup_{0\le t \le 1} |z_t|^2})$ is finite if $4a<1/2$. This follows from Fernique's theorem. 
 (Integrability of exponential of Bessel bridges were studied in \cite{Ikeda-Matsumoto, Pitman-Yor-bridge}.)
For~$t<1$,
$$\f {d  \nu_2}  {d\nu^{(1)}}  =  \exp { \left(   \int_0^t   \left \<  dB_s,  \f {f(s, z_s)} {(1-s)^\delta}  \right\>
-\f 12  \int_0^t    \f  { \left|  f(s, z_s) \right|^2}  {(1-s)^{2\delta}}   ds   \right)}.$$
By the assumption,  $$ \f 12 \int_0^t    \f  { \left|  f(s, z_s) \right|^2}  {(1-s)^{2\delta}}   ds  \le (1+\sup_{s\in [0,1)} |z_s|^2) \f {c^2} {2(1-2\delta)}.$$
which is exponentially integrable. Let $N_t= \int_0^t   \left \<  dB_s,  \f {f(s, z_s)} {(1-s)^\delta}  \right\>$.
We invoke the Novikov criterion to conclude that the exponential martingale of $N_t$  is uniformly  integrable on $[0,1)$ and  converges in $L^1$ as $t$ approaches $1$. It follows that $\nu_2$ is absolutely continuous with respect to $\nu^{(1)}$.
Since $(N_t-\f 12 \<N, N\>_t, t\in [0, 1))$ is $L^2$ bounded, it converges in $L^2$ and so has a finite limit.
Thus $\lim_{t\to 1}G_t \not =0$ and the two measures are equivalent.
\end{proof}

\def\cprime{$'$} \def\cprime{$'$}


\begin{thebibliography}{10}

 \bibitem {Aida15} S. Aida, \newblock Asymptotics of spectral gaps on loop spaces over a class of Riemannian manifolds. \newblock{\em  J. Funct. Anal. } 269 (2015), no. 12, 3714--3764.
\bibitem{Aida-Driver}
S. Aida and B. K.Driver, \newblock Equivalence of heat kernel measure and pinned Wiener measure on loop groups. \newblock {\em C. R. Acad. Sci. Paris S\'er. I Math. } 331 (2000), no. 9, 709--712.
\bibitem {Aldous-Thorisson}
David J. Aldous and Hermann Thorisson,
\newblock{ Shift-coupling}, \newblock{\em Stochastic Process. Appl.}, 44(1):1-14, 1993.

\bibitem{Barczy-Kern}
M. Barczy and P. Kern. Sample path deviations of the Wiener and Ornstein-Uhlenbeck 
process from its bridges. Braz. J. Probab. Stat.

\bibitem{Bertoin-Chaumont-Pitman}
Jean Bertoin, Lo\"ic Chaumont, and Jim Pitman,
\newblock {Path transformations of first passage bridges}, \newblock{\em Electron. Comm. Probab.}, 8:155-166 (electronic), 2003. 


\bibitem{Biane-Legall-Yor}
Ph. Biane, J.-F. Le Gall, and M. Yor. \newblock{Un processus qui ressemble au pont brownien},  In S\'eminaire de Probabilit\'es, XXI,  LNIM (volume 1247), pp 270-275. Springer, Berlin, 1987. 

\bibitem{Bogachev}
Bogachev, Vladimir I. \newblock{ Gaussian measures},
\newblock{\em Mathematical Surveys and Monographs},
1998.


\bibitem{Embrechts-Rogers-Yor}
Paul. Embrechts, L. C. G. Rogers, and Marc Yor. 
\newblock{ A proof of Dassios? representation of the quantile of Brownian motion with drift},
\newblock{ \em  Ann. Appl. Probab.}, 5(3):757-767, 1995. 

\bibitem{Cameron-Martin}
R. H.  Cameron and  W. T. Martin,
\newblock{ Transformations of Wiener integrals under a general class of linear
   transformations},
\newblock {\em Trans. Amer. Math. Soc.}, 58:184--219, 1945.

\bibitem{CLW}
Xin Chen, Xue-Mei Li, and Bo~Wu.
\newblock A {P}oincar\'e inequality on loop spaces.
\newblock {\em J. Funct. Anal.}, 259(6):1421--1442, 2010.


\bibitem{DaPrato-Zabczyk}
 {G. Da Prato and J. Zabczyk},
  \newblock {\em Stochastic equations in infinite dimensions},
  { Encyclopedia of Mathematics and its Applications, 44}, {Cambridge University Press},
2nd edition, 2014.


\bibitem{Delbaen-Schachermayer}
    {F. Delbaen and W. Schachermayer},
 \newblock {A simple counterexample to several problems in the theory of
              asset pricing},
\newblock {\em Math. Finance}, 8(1)1--11 {1998}.
   
 
\bibitem{ELY97}
K.~D. Elworthy, X.~M. Li, and M.~Yor.
\newblock On the tails of the supremum and the quadratic variation of strictly
  local martingales.
\newblock In {\em S\'eminaire de {P}robabilit\'es, {XXXI}}, volume 1655 of {\em
  Lecture Notes in Math.}, pages 113--125. Springer,  1997.

\bibitem{ELY99}
K.~D. Elworthy, Xue-Mei Li, and M.~Yor.
\newblock The importance of strictly local martingales; applications to radial
  {O}rnstein-{U}hlenbeck processes.
\newblock {\em Probab. Theory Related Fields}, 115(3):325--355, 1999.
   \bibitem{Gross}
L. Gross.
\newblock {Logarithmic {S}obolev inequalities},
\newblock {\em Amer. J. Math.}, 97(4):1061--1083, 1975.

\bibitem{Hitsuda}
Masuyuki Hitsuda,   Representation of {G}aussian processes equivalent to {W}iener process.
\newblock {\em  Osaka J. Math. }  5299--312  1968.

\bibitem{Ikeda-Matsumoto} 
N. Ikeda and H. Matsumoto.  \newblock {B}rownian motion on the hyperbolic plane and {S}elberg trace formula, J. Func. Anal., 162, 63--110. (1999).


\bibitem{semi-classical}
Xue-Mei Li.
\newblock{On the Semi-Classical {B}rownian Bridge Measure},
Electronic Communications in Probability 2017, Vol. 22, paper no. 38, 1-15

\bibitem{Li-hypoelliptic-bridge}
Xue-Mei Li.
\newblock On hypoelliptic bridge.
\newblock {\em Electron. Commun. Probab.}, 21:Paper No. 24(12), 2016.

\bibitem{Li-slm}
Xue-Mei Li.
\newblock
{Strict Local Martingales: examples.}
  Statistics and Probability Letters 129 (2017) 65-68.
%

\bibitem{Lupu-Pitman-Tang}
Titus Lupu, Jim Pitman and  Wenpin Tang, 
\newblock   {The Vervaat transform of Brownian bridges and Brownian motion},
  \newblock {\em Electron. J. Probab. },  20(51):1083-6489,  2015.

\bibitem{Mazzolo}
{A. Mazzolo} \newblock Constraint Ornastein-Uhlenbeck bridges. J. Math. Phys. 58, 
093302 (2017).
\bibitem{Mijatovic-Urusov}
{A. Mijatovi{\'c},  and M. Urusov},
  \newblock {On the martingale property of certain local martingales},
 {\em Probab. Theory Related Fields},
152 (102):1--30,  {2012}.	
	
\bibitem{Pal-Protter}	
S. Pal and P. Protter.
\newblock {Analysis of continuous strict local martingales via h-transforms},
{\em Stochastic Processes and their Applications}, 120(8):1421-1443, {2010}.
	
\bibitem{Protter15}
{P. Protter},
\newblock { Strict local martingales with jumps},
 \newblock {\em Stochastic Process. Appl.},
 125(4)1352--1367, {2015}.



\bibitem{Pitman-Tang}
Jim Pitman and  Wenpin Tang, 
\newblock{ The {S}lepian zero set, and {B}rownian bridge embedded in {B}rownian motion by
   a spacetime shift},
\newblock{\em J Electron. J. Probab.} 20(61):1083-6489, 2015.

		
\bibitem{Pitman-Yor-bridge}
J. Pitman and M. Yor.  \newblock{The law of the maximum of a Bessel bridge}, 
\newblock{\em  Electron. J. Probab. } 4 (1999), 
no. 15, 35pp.
   \bibitem{Revuz-Yor}
   D. Revuz and M. Yor. \newblock{ \em Continuous martingales and {B}rownian motion}, Third Edition.
   Springer.
   
   
\bibitem{Segal}
I. E. Segal, 
\newblock{ Distributions in {H}ilbert space and canonical systems of operators},
\newblock{\em  Trans. Amer. Math. Soc.}  88:12--41, 1958.

\bibitem{Shepp}
L. A. Shepp, 
\newblock{Radon-Nikodym derivatives of Gaussian measures},
\newblock{\em Ann. Math, Stats.}  37:321-354, 1966.

 \bibitem{Varberg}
 Dale E.  Varberg, 
\newblock{On Gaussian measures equivalent to Wiener measure},
\newblock{ \em Trans. Amer. Math. Soc.} vol. 113, pp 262-271, 1964.

\bibitem{Vervaat}
Wim Vervaat. \newblock{ A relation between Brownian bridge and Brownian excursion},
 \newblock{  \em Ann. Probab.}, 7(1):143-149, 1979. 
 
\bibitem{Vigon} Vincent Vigon.   \newblock{  (Homogeneous) Markovian bridges,}
  \newblock{ \em Ann. Inst. Henri Poincaré Probab. Stat.}, 47(3):875?916, 2011. 

\bibitem{Woodward}
D. A. Woodward, 
\newblock{ A general class of linear transformations of Wiener integrals},
\newblock{\em Trans. Amer. Math. Soc.} 100: 459--480, 1961.

 \end{thebibliography}
\end{document}